\newtheorem{theorem}{Theorem}
\newtheorem{lemma}[theorem]{Lemma}
\theoremstyle{definition}
\begin{document}

\title{Efficient Detection of Borromean Linking in Ellipses}

\author{Jonathan Strange\thanks{Department of Mathematics and Statistics, California State University Long Beach, Long Beach, CA  90840, USA (Jonathan.Strange01@student.csulb.edu)} \and Ryan Blair\thanks{Department of Mathematics and Statistics, California State University Long Beach, Long Beach, CA  90840, USA (ryan.blair@csulb.edu)}\and Alexander R. Klotz\thanks{Department of Physics and Astronomy, California State University Long Beach, Long Beach, CA  90840, USA (Alex.Klotz@csulb.edu)}}


\maketitle

\begin{abstract}
    
We describe a method by which the number of intersections one ellipse makes inside the plane of another can be determined.  The method is based on applying a transformation that reverts one ellipse to the unit circle, and examining the intersection points of the transformed second ellipse with the unit circle. This may be used to efficiently determine Hopf linking between two ellipses, or Borromean linking between three ellipses.
\end{abstract}

\section{Introduction}

Topological links occur in many physical systems including astrophysical magnetic fields \cite{berger}, fluid vortices \cite{irvine}, and biological and synthetic polymers \cite{vinograd, depablo}. The key mathematical tool for determining whether two systems are linked is the Gauss linking number (GLN), a double integral over two space curves that determines the number of times one curve passes through another \cite{epple1998orbits}. This is broadly applied to understand DNA supercoiling \cite{supercoil}, the topology of kinetoplast DNA \cite{he}, the orbits of asteroids \cite{epple1998orbits}, and stellar magnetohydrodynamics \cite{berger}. However, there are forms of topological linking that the GLN does not capture. Examples include the Whitehead link between a circle and a twisted loop, and Borromean rings, which feature three unlinked circles that cannot be separated. Originally the heraldic symbol of the House of Borromeo, Borromean rings are the simplest \textit{Brunnian} link, a class of links in which any subset of the components are unlinked \cite{brunnian}.

In dense solutions or melts of polymers, entanglements between the constituent molecules dictate material properties \cite{entanglement}. Pairwise interactions are mainly considered, but higher-order multi-polymer interactions analogous to Borromean linking may influence behavior. A small number of simulation studies of polymer melts have detected such links \cite{ubertini, cao2014simulating}. An \textit{Olympic gel} is a network of topologically linked circular molecules \cite{raphael1997progressive}, and kinetoplast DNA is a natural example that occurs in trypanosome parasites \cite{shapiro1995structure}.  Several analyses of the topology of kinetoplast DNA \cite{davide2, he} used the GLN to determine the structure of the linking between DNA minicircles. These have revealed an intricate but disorganized network structure featuring a combination of Hopf and Solomon linking (GLN of $\pm$1 or 2), but these analyses were unable to detect other types of links including Borromean rings. Due to the computational difficulty of computing the necessary knot invariants, it is unknown if Borromean linking is likely in dense Olympic gel networks, or what effect this has on a gel's viscoelasticity. 

In principle there are many knot invariants that can be used to detect Borromean rings. In practice however, few of these have been successfully implemented as algorithms that can take space curves as input and yield a topological classification as output. The extension of the Gauss linking number to three curves is the Milnor triple integral and the related Massey product \cite{milnor}. Several works have attempted to show that these can detect Borromean rings \cite{akhmet2002milnor,evans1992hierarchy,mellor2003geometric}, but did not demonstrate the ability to do so with arbitrary curves. Others have developed algorithms that compute such parameters, but typically only tested these on a single configuration of known topology \cite{velavick, alokbia2022new}. The Python package Topoly \cite{topoly} is able to detect Borromean rings from Cartesian coordinates based on a computation of the Jones polynomial, as can a parallel implementation in Python by Barkataki and Panagiotou \cite{barkataki2025parallel}. Such algorithms have been used to examine the formation of Borromean linking in random polymer melts \cite{ubertini} and analyze open Borromean curves \cite{barkataki2022jones}. Because the number of possible triple links grows as the cube of the number of molecules, it is desirable to be able to efficiently check for Borromean linking.

A recent analysis by one of the authors of this manuscript \cite{klotz2024borromean} showed that Borromean links could arise in dense packings of random rectangles with mutually perpendicular normals. That work exploited the geometry of perpendicularly aligned rectangles to rapidly determine whether they formed Borromean connections, rather than relying on knot invariants. The determination was based on the idea of ``piercing'' in which two opposite sides of one rectangle pass through the plane of another. If three rectangles (\textit{A, B, C}) are arranged such that $A$ pierces $B$, $B$ pierces $C$, and $C$ pierces $A$, the rectangles form Borromean links, and this can be determined from the coordinates of their vertices. 

Can this quick geometric detection of Borromean links be extended to other shapes? Freedman and Skora \cite{FreedmanSkora1987} showed that no Brunnian link can be built out of round circles, but a Borromean link can be constructed out of ellipses, as in Figure 1b. More complex Brunnian links exist, but Howards showed that the Borromean rings are the only Brunnian link of 3 components that can be built out of convex planar curves \cite{Howards2006}. In particular, Howards' results can be used to give a classification of Borromean linking achieved by three ellipses based on the intersection of three disks bounded by the ellipses. In an effort to keep the results of this paper self-contained, we provide a proof of this classification theorem from basic principles in Section \ref{Sec:Proof}. This proof is  achieved by constructing isotopies of surfaces and ambient isotopies of curves using cut-and-paste topology techniques. Following the proof, we extend the idea of geometric piercing to develop an algorithm in Section \ref{Sec:imp} for determining whether ellipses may be linked or pierced. Our method involves taking a transformation that maps one ellipse to the unit circle, applying it to two ellipses, and identifying the points of intersection between the second ellipse and the XY plane (Fig. 1a). The algorithm based on our method can identify linking between two ellipses and Borromean linking between three ellipses approximately 150 times faster than existing algorithms based on the Gauss linking number and the Jones polynomial respectively.
\color{black}

\section{Summary of Main Result}

Ellipses can be described as unit circles transformed by a matrix. Given two ellipses transformed by distinct matrices and applying the inverse of one ellipse's transformation matrix to both, one will be returned to the unit circle and the other will not. If the second ellipse intersects the XY plane after the inverse transformation is applied, the number of intersections within the unit circle determines the number of intersections between the ellipses (Fig. 1a). If three ellipses each pass through each other twice in a certain order, they can form Borromean rings (Fig. 1b).

\begin{figure}
    \centering
    \includegraphics[width=1\linewidth]{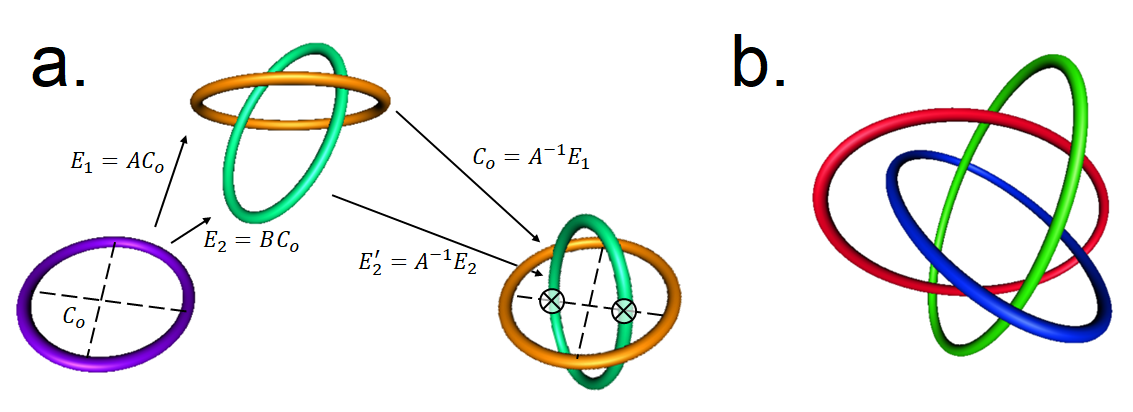}
    \caption{a. A unit circle $C_o$ is transformed by two different matrices $\textbf{A}$ and $\textbf{B}$ into two distinct ellipses $E_1$ and $E_2$.  In this example, when the inverse transformation of one ellipse is applied to both, one returns to the unit circle and the other (in this case) intersects the unit disk twice. The locations of the intersections after this transformation may be used to determine the number of piercings between the two ellipses. b. Borromean rings formed from three interpenetrating ellipses.}
    \label{fig:placeholder}
\end{figure}

Two disjoint ellipses in three-dimensional space may pass through each other zero, one, or two times, but the Gauss linking number cannot distinguish between zero and two passages. There are several ways for two ellipses to be situated: one ellipse may never intersect the plane of another (Fig. 2a), one ellipse may intersect the plane of another outside the second ellipse (Fig. 2b), one ellipse may intersect the plane of another once inside and once outside the second ellipse, forming a Hopf link (Fig. 2c), one ellipse may intersect the plane of another twice inside the second ellipse (Fig. 2d), or one ellipse may intersect the plane of another ellipse such that it passes through zero times, but the second ellipse passes through it twice (Fig. 2e). Other configurations are possible, such as the ellipses intersecting at one or more points, or one ellipse having a single intersection point with the plane of the other. Because we are concerned with the relative positions and orientations of randomly positioned and oriented ellipses, these configurations occur with probability zero and are not the primary topic of our discussion. The main result of this paper is an algorithm that can efficiently detect Hopf linking and Borromean rings in sets of random ellipses (Sec. \ref{Sec:imp}), as well as a re-derivation of a proof \cite{Howards2006} that such an algorithm can do so correctly (Sec. \ref{Sec:Proof}).

\begin{figure}
    \centering
    \includegraphics[width=1\linewidth]{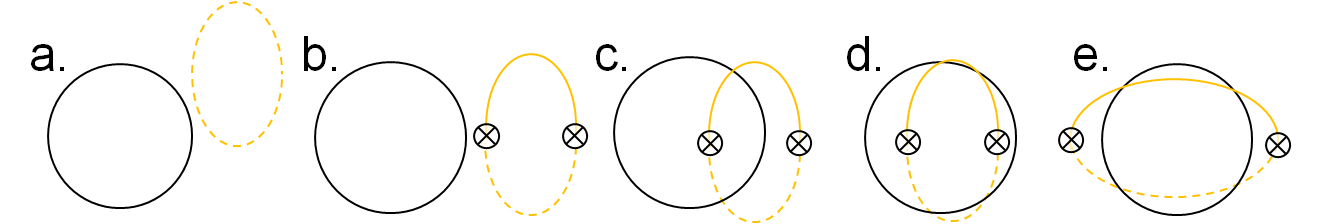}
    \caption{The five ways two ellipses may intersect each other. a. One ellipse does not intersect the plane of the other. b. One ellipse intersects the plane of the other, outside the second ellipse. c. One ellipse intersects the plane of the other, once inside and once outside the ellipse, forming a Hopf link. d. One ellipse intersects the plane of the other at two points inside the ellipse. e. One ellipse intersects the plane of the other at two points outside the ellipse, but the second ellipse passes through the first. We exclude zero-probability configurations such as the two ellipses sharing points, lying in the same plane, or one ellipse touching the other plane at one point.}
    \label{fig:placeholder}
\end{figure}
 
\color{black}

\section{Proof of Classification Theorem}\label{Sec:Proof}


\subsection{Preliminaries}

Recall that every rigid transformation of $R^3$ is a composition of rotations, translations and reflections. In this paper, a \emph{planar ellipse} is any subset of the xy-plane of the form $\{(x,y)|\frac{x^2}{a^2}+\frac{y^2}{b^2}=1\}$ where $a$ and $b$ are fixed non-zero constants. Additionally, we refer to the image of any planar ellipse under a rigid transformation of $\mathbb{R}^3$ as an \emph{ellipse}. In this section we study the properties of links in $\mathbb{R}^3$ where each component of the link is an ellipse. We call such links \emph{elliptic links}.

Whenever considering two ellipses $E_1$ and $E_2$ in $\mathbb{R}^3$, we always assume that the ellipses are in \emph{standard position} so that $E_1$ intersects the plane containing $E_2$ transversely, $E_2$ intersects the plane containing $E_1$ transversely, and $E_1\cap E_2=\emptyset$. Note that standard position can always be achieved by fixing $E_2$ and applying an arbitrarily small translation to $E_1$. Moreover, two randomly chosen ellipses will be in standard position with probability one.

Given an ellipse $E_i$, we let $P_i$ denote the plane in $\mathbb{R}^3$ that contains $E_i$ and let $D_i$ be the closed topological disk that $E_i$ bounds in $P_i$. Note that if $E_i$ and $E_j$ are in standard position, then $D_i\cap D_j$ is of one of the following forms:

\begin{enumerate}
\item $D_i\cap D_j=\emptyset$

\item $D_i\cap D_j$ is a line segment $\beta$ with one endpoint of $\beta$ in $E_i=\partial D_i$ and the other endpoint of $\beta$ in $E_j=\partial D_j$.

\item $D_i\cap D_j$ is a line segment embedded in the interior of $D_i$ and properly embedded in $D_j$.

\item $D_i\cap D_j$ is a line segment embedded in the interior of $D_j$ and properly embedded in $D_i$.
\end{enumerate}

We will refer to the above cases as IF1, IF2, IF3 and IF4 , respectively where IF stands for ``intersection form''. In each case, $D_i$ and $D_j$ intersect transversely as smooth $2$-manifolds embedded in $\mathbb{R}^3$. Note that if $D_i\cap D_j$ is IF2, then $E_i\cup E_j$ is a Hopf link. In IF3 we say that $D_j$ \emph{pierces} $D_i$ and we denote this configuration by $D_j\rightarrow D_i$. Similarly, in IF4 we say $D_i$ pierces $D_j$ and write $D_i\rightarrow D_j$.

A \emph{Borromean link} is any link ambient isotopic to the link depicted in Figure \ref{fig:ellipticBorromean}. As illustrated in the figure, it is possible to construct a three-component elliptic link that is a Borromean link. As a consequence of standard position, given an elliptic link $K$ with components $E_1$, $E_2$ and $E_3$, either $D_1\cap D_2 \cap D_3 =\emptyset$ or $D_1\cap D_2 \cap D_3 =\{p\}$ for some point $p$. In the latter case, we say $p$ is the \emph{triple point} of $K$.

\begin{figure}[htbp]
    \centering
    \includegraphics[width=2in]{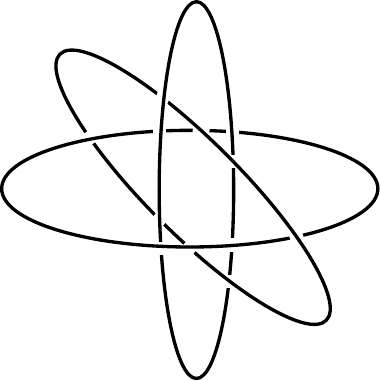}
    \caption{An elliptic Borromean link. }
    \label{fig:ellipticBorromean}
\end{figure}

In \cite{Howards2006}, Howards studied Brunnian links of 3 components
that can be built out of convex planar curves. Since all ellipses in $\mathbb{R}^3$ are convex planar curves, Howards' results can be used to give a classification of Borromean linking achieved by three ellipses. The relevant result is the following:

\begin{lemma}\label{Lem:How}(Lemma 2.2 of \cite{Howards2006})
    The Borromean rings, are the unique link with three convex planar components, bounding planar disks, with one triple point in their intersection and one exterior arc and one interior arc on each of the three disks.
\end{lemma}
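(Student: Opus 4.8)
The plan is to show that the hypotheses pin down the combinatorial type of the union of disks $\Sigma = D_1\cup D_2\cup D_3$ completely, and then to promote this combinatorial rigidity to an ambient isotopy of $\mathbb{R}^3$ carrying $\Sigma$ to a fixed standard model whose boundary is visibly the Borromean rings. Since $E_i=\partial D_i$, any ambient isotopy taking $\Sigma$ to the standard model takes the link $E_1\cup E_2\cup E_3$ to the standard Borromean link, which is exactly what must be shown. Thus the whole argument reduces to establishing uniqueness of $\Sigma$ up to ambient isotopy.

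First I would fix the combinatorial skeleton. Because there is a single triple point $p$ and the three components are in standard position, each pairwise intersection $\beta_{ij}=D_i\cap D_j$ is one of the segments classified by the intersection-form analysis, and all three segments pass through $p$; hence each disk $D_i$ contains exactly two of them, meeting at $p$. The hypothesis of one interior and one exterior arc on each disk is precisely the statement that, for each $i$, one of the two segments in $D_i$ is a chord of $D_i$ (endpoints on $E_i$, the \emph{exterior} arc) and the other is an interior segment (endpoints in $\mathrm{int}\,D_i$, lying on some $E_k$, the \emph{interior} arc); equivalently, each disk pierces exactly one other disk and is pierced by exactly one other. Recording $D_i\rightarrow D_j$ as a directed edge, every vertex of the resulting digraph on $\{1,2,3\}$ has in-degree and out-degree one, which forces a single directed $3$-cycle. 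After relabeling we may assume $D_1\rightarrow D_2\rightarrow D_3\rightarrow D_1$, so that $\beta_{12}$ is a chord of $D_1$ interior to $D_2$, $\beta_{23}$ is a chord of $D_2$ interior to $D_3$, and $\beta_{13}$ is a chord of $D_3$ interior to $D_1$. The two choices of cyclic orientation give mirror configurations, and since the Borromean rings are amphichiral this entails no loss of generality.

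With the skeleton fixed, I would build the ambient isotopy in stages. By standard position the three planes meet only at $p$, so their normals are independent and the pairwise intersection lines $P_i\cap P_j$ are three distinct lines through $p$; such a plane triple is ambient isotopic to the three coordinate planes, and I would straighten $(P_1,P_2,P_3)$ accordingly, placing $p$ at the origin with each $\beta_{ij}$ on a coordinate axis. Each disk now sits in a coordinate plane as a convex region carrying one prescribed chord and one prescribed interior segment along the axes. Working one plane at a time and keeping the axis segments fixed setwise, I would isotope each convex region to a round disk of prescribed radius and center so that the chord/interior-segment data matches the standard model of Figure~\ref{fig:ellipticBorromean}; planarity and convexity guarantee that no disk is knotted and that such planar isotopies exist. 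Assembling these isotopies, supported near their respective planes, yields an ambient isotopy of $\mathbb{R}^3$ taking $\Sigma$ to the standard model, and hence the link to the Borromean rings.

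The main obstacle is this assembly step: the individual disk-straightening isotopies must be carried out simultaneously without one disk's motion either creating spurious intersections with, or destroying the prescribed intersections on, the other two. The delicate point is global rather than local --- near $p$ the three disks already look like three half-plane sectors meeting along the axes, but one must verify that the portions of the disks away from $p$ can be swept into standard position without the flat disks ``catching'' on one another, so that throughout the isotopy the only intersections are the three controlled segments. This is exactly where the cut-and-paste surface techniques enter: one cuts each disk along its intersection segments into sub-disks, isotopes the pieces using their planarity, and reglues, checking at each stage that the intersection pattern with the other two disks is preserved. Once this bookkeeping is complete, the configuration coincides with the fixed Borromean model and the proof concludes.
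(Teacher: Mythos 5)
Your combinatorial step is sound and matches the paper's own translation of Howards' arc language into piercing: one exterior and one interior arc on each disk forces in-degree one and out-degree one at every vertex of the piercing digraph, hence the directed cycle $D_1\rightarrow D_2\rightarrow D_3\rightarrow D_1$ after relabeling, and straightening the three planes to the coordinate planes by a linear isotopy is unproblematic (linear maps preserve planarity and convexity). The genuine gap is the step you yourself call ``the main obstacle'': the simultaneous straightening of the three disks. As written, this is a promissory note --- ``once this bookkeeping is complete, the proof concludes'' --- but that bookkeeping \emph{is} the theorem, and it is not mere bookkeeping. A planar isotopy of $D_1$ inside $P_1$ extends to an ambient isotopy of $\mathbb{R}^3$ only with support in a neighborhood of the track of the motion, and any such neighborhood meets $D_2$ and $D_3$; the extension will in general drag $D_2$ and $D_3$ off their planes, destroying the planarity and convexity on which your disk-by-disk scheme relies. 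If instead you insist the motion fix the other two disks, then ``keeping the axis segments fixed setwise'' is not the right invariant, because $D_1\cap D_2$ depends on both disks: as $D_1$ slides in $P_1$ you must maintain the interval containments $D_1\cap\ell_{12}\subset \mathrm{int}\,(D_2\cap\ell_{12})$ and $D_3\cap\ell_{13}\subset \mathrm{int}\,(D_1\cap\ell_{13})$ at every time, and your ``prescribed'' terminal round disks must satisfy these interlocked containments relative to the two disks that have not yet been straightened. None of this is argued, and it is precisely the place where an argument of this shape can silently fail.

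The paper's proof (the corresponding direction of Theorem \ref{thm:main}) is designed to avoid exactly this problem: it never straightens the disks at all, and moves only the curves, each along the disk it bounds. Since $D_2\rightarrow D_3$ gives $D_2\cap E_3=\emptyset$, the curve $E_2$ can be slid along $D_2$ into the boundary of an $\epsilon$-neighborhood of $\alpha=D_1\cap D_2$ without ever crossing $E_1$ or $E_3$; then, since $D_3\rightarrow D_1$ gives $D_3\cap E_1=\emptyset$ and the shrunken $E_2$ meets $D_3$ only near $D_1$, the curve $E_3$ can be slid along $D_3$ into the boundary of a $2\epsilon$-neighborhood of $\gamma=D_1\cap D_3$. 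The disjointness facts licensing each slide are exactly the consequences of the piercing cycle, so no simultaneous-motion conflict ever arises, and the end position is an explicit diagram (Figure \ref{fig:IsoToAlphaBeta}) that is visibly the Borromean rings. To rescue your version you would need an analogue of this: a specified order (or parametrized family) of motions together with invariants that provably persist throughout --- which is the actual geometric content the proposal currently leaves blank.
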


Note that Lemma \ref{Lem:How} implies Theorem \ref{thm:main} when the language of exterior and interior arcs is translated into the language of piercing. However, we still prove Theorem \ref{thm:main} from basic principles in the next section in an effort to keep this paper self-contained.

Recall that every smoothly embedded $2$-sphere $S$ in $\mathbb{R}^3$ cuts $\mathbb{R}^3$ into two connected components, a $3$-ball $B^S_1$ and a punctured $3$-ball $B^S_2$. A link $K$ in $\mathbb{R}^3$ is \emph{split} if there exists a smoothly embedded $2$-sphere $S$ so that some components of $K$ are contained in $B^S_1$ and some components of $K$ are contained in $B^S_2$. Given an elliptic link $K$ with component $E_i$, let $\overline{\eta(D_i)}$ denote a small closed regular neighborhood of $D_i$ in $\mathbb{R}^3$. Then $S_i= \partial (\overline{\eta(D_i)})$ is a smoothly embedded $2$-sphere such that $D_i \subset B^{S_i}_1$. In the argument that follows, we will use the spheres $S_i$ to define isotopies and to identify split links.

\subsection{Classification of Elliptic Borromean Links}

\begin{theorem}\label{thm:main}
Given a 3-component elliptic link $K=E_1 \cup E_2 \cup E_3$ in standard position in $\mathbb{R}^3$, $K$ is a Borromean link if and only if $K$ has a triple point and (after possibly relabeling $E_1$, $E_2$ and $E_3$) $D_1\rightarrow D_2 \rightarrow D_3 \rightarrow D_1$.
\end{theorem}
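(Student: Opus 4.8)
The plan is to prove both directions of the biconditional, where the harder and more substantive direction is the forward implication: if $K$ is Borromean, then it must have a triple point and a cyclic piercing pattern $D_1\rightarrow D_2\rightarrow D_3\rightarrow D_1$. The reverse direction, that the cyclic piercing configuration yields a Borromean link, can be handled directly by exhibiting an explicit ambient isotopy to the standard diagram of Figure \ref{fig:ellipticBorromean}, or more cleanly by appealing to Lemma \ref{Lem:How}: the cyclic piercing hypothesis, translated into the language of interior and exterior arcs, says precisely that each disk carries one interior arc and one exterior arc, with a single triple point, so Lemma \ref{Lem:How} identifies $K$ as the Borromean rings.

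For the forward direction I would argue the contrapositive: assuming $K$ is \emph{not} in the cyclic piercing configuration with a triple point, I would show $K$ is not Borromean, typically by showing it is split or that some two-component sublink is nontrivially linked (contradicting the Brunnian property). The key organizing principle is a case analysis over the pairwise intersection forms IF1--IF4 of the three disk pairs $(D_1,D_2)$, $(D_1,D_3)$, $(D_2,D_3)$, together with the triple-point dichotomy $D_1\cap D_2\cap D_3\in\{\emptyset,\{p\}\}$. First I would dispose of configurations lacking a triple point: if any pair is IF1 (disjoint disks), then that pair of ellipses bounds disjoint disks, so I can use the enclosing sphere $S_i=\partial(\overline{\eta(D_i)})$ to split that component off, showing $K$ is split and hence not Borromean. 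If any pair is IF2, then by the preliminaries that pair forms a Hopf link, so a two-component sublink is linked and $K$ fails to be Brunnian. This reduces the analysis to the case where every pair is a piercing (IF3 or IF4) and a triple point exists.

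In the remaining case each of the three pairs has a definite piercing direction, giving an orientation on each edge of the triangle on vertices $\{D_1,D_2,D_3\}$. There are two combinatorial types up to relabeling: the cyclic type $D_1\rightarrow D_2\rightarrow D_3\rightarrow D_1$ (the claimed Borromean configuration) and the acyclic/transitive type, in which some disk, say $D_2$, pierces both others, or is pierced by both others (a ``source'' or ``sink''). The crux is to show the acyclic type is \emph{not} Borromean. Here I would construct an explicit ambient isotopy, using cut-and-paste along the spheres $S_i$, that simplifies the configuration: when one disk is a source or sink, the interior/exterior arc structure fails the Lemma \ref{Lem:How} criterion, and I expect to be able to isotope one component clear of the others across $S_i$, exhibiting a splitting or a disk that the component bounds disjointly from the rest of the link. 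Concretely, a source disk $D_2$ (with $D_2\rightarrow D_1$ and $D_2\rightarrow D_3$) meets each of $D_1,D_3$ in an arc interior to $D_2$; I would use these arcs and the triple point to push $E_2$ off through $S_1$ or $S_3$.

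The main obstacle I anticipate is the acyclic case: verifying rigorously that the source/sink configuration is split (or at least not Borromean) requires carefully tracking how the interior arcs on the pierced disk interact at the triple point and building the cut-and-paste isotopy so that it genuinely frees a component without introducing new intersections. Making this isotopy explicit and checking that it is ambient (extending to all of $\mathbb{R}^3$, not merely an isotopy of the curve) is the delicate step; the spheres $S_i$ and their bounding balls $B^{S_i}_1$ are the natural tool, since an isotopy supported in a regular neighborhood of a disk can be extended by the identity outside. The rest of the argument — the IF1 and IF2 reductions and the reverse direction via Lemma \ref{Lem:How} — I expect to be comparatively routine.
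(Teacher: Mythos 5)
Your overall architecture (eliminate IF1 and IF2 pairs, orient the remaining piercings, split cyclic from acyclic) parallels the paper's, and your reverse direction is fine (the paper does the explicit isotopy you mention; citing Lemma \ref{Lem:How} would also work, though the paper deliberately avoids it to stay self-contained). However, the forward direction has a genuine gap: the sentence ``This reduces the analysis to the case where every pair is a piercing (IF3 or IF4) and a triple point exists'' is a non sequitur. Eliminating IF1 and IF2 in no way produces a triple point: all three pairs can be piercings, even in the cyclic pattern $D_1\rightarrow D_2\rightarrow D_3\rightarrow D_1$, while the three intersection segments are pairwise disjoint, so that $D_1\cap D_2\cap D_3=\emptyset$. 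This configuration is realizable by ellipses (the paper depicts it in Figure \ref{fig:TriplePierceNoPoint}), and since ``has a triple point'' is part of the characterization you are proving, you must show it is not Borromean. That requires its own argument --- the paper isotopes $D_j$ off across $D'_i$, $D_k$ off across $D'_j$, and $D_i$ off across $D'_k$ to produce three pairwise disjoint disks, exhibiting $K$ as the $3$-component unlink. Your case analysis never touches this configuration, and your treatment of the acyclic case explicitly invokes ``the triple point,'' so the no-triple-point cases (cyclic and acyclic) are simply absent.

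A second, more local problem: your IF1 step fails as stated. If $D_i\cap D_j=\emptyset$ but the third disk pierces $D_i$ (i.e., $D_k\rightarrow D_i$), then $E_k$ passes through the interior of $D_i$, so $S_i=\partial(\overline{\eta(D_i)})$ is not even disjoint from $K$; disjointness of the pair alone does not make $S_i$ a splitting sphere. The correct criterion is that $S_a$ splits off $E_a$ exactly when neither other ellipse meets $D_a$, i.e., no other disk pierces $D_a$ and no pair at $D_a$ is IF2. With that fix your sphere idea actually becomes quite powerful: when one pair is IF1 at most two piercing ``edges'' exist, so by counting in-degrees some disk is pierced by nobody and its sphere splits $K$ immediately; likewise in the acyclic all-piercing case the source disk (your $D_2$ with $D_2\rightarrow D_1$, $D_2\rightarrow D_3$) is pierced by nobody, so $S_2$ splits $K$ with no isotopy needed --- arguably cleaner than the paper's cut-and-paste arguments of Figures \ref{fig:IsoDisjoint1}--\ref{fig:IsoDisjoint3}. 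Note also a definitional slip that matters for that case: for a source disk $D_2$, the arcs $D_2\cap D_1$ and $D_2\cap D_3$ are properly embedded in $D_2$ (endpoints on $E_2$) and interior to $D_1$ and $D_3$, the opposite of what you wrote. So: the sphere machinery can be repaired and even streamlined, but as proposed the splitting criterion is wrong, and the missing no-triple-point case leaves the ``only if'' direction unproved.
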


\begin{proof}
Assume that $K=E_1 \cup E_2 \cup E_3$ is a 3-component elliptic link in standard position and that $K$ is a Borromean link. First, we recall some basic properties of Borromean links. Note that every two-component sublink of a Borromean link is an unlink. In particular, no two-component sublink of $K$ is a Hopf link. Hence, no $D_i\cap D_j$ is IF2. Additionally, Borromean links are not split links. Hence, there is no smoothly embedded $2$-sphere $S$ in the exterior of $K$ so that some components of $K$ are contained in $B^S_1$ and some components of $K$ are contained in $B^S_2$.

Let $\{i,j,k\}=\{1,2,3\}$ and suppose that both $D_i\cap D_j$ and $D_i\cap D_k$ are IF1. Then $S_i= \partial (\overline{\eta(D_i)})$ is a smoothly embedded $2$-sphere demonstrating that $K$ is a split link, a contradiction. Since both $D_i\cap D_j$ and $D_i\cap D_k$ are not IF1 and not IF2, then, after possibly rechoosing labels $i,j$ and $k$, we can assume that $D_i\cap D_j$ is IF4, equivalently $D_i\rightarrow D_j$.

We first suppose that $D_j \cap D_k$ is IF1, equivalently $D_j\cap D_k=\emptyset$. In this case we can construct a sphere in the exterior of $K$ demonstrating that $K$ is split. To see this, we begin by examining the line segment $\alpha = D_i\cap D_j$. Note that $\alpha$ is a properly embedded arc in $D_i$ that cuts $D_i$ into two disks, $D'_i$ and $D''_i$. Note that since $D_j\cap D_k=\emptyset$, then $K$ can not have a triple point. Since $K$ has no triple point, $D_i \cap D_k$ is disjoint from $\alpha$ and $D_i \cap D_k$ is empty or is contained in exactly one of $D'_i$ or $D''_i$. Without loss of generality, we assume $D_i \cap D_k$ is empty or is contained in $D''_i$ and $D_k\cap D'_i=\emptyset$. We perform an isotopy of $D_j$ supported in a neighborhood of $D'_i$ after which the resulting disk $D^*_j$ is disjoint from both $D_i$ and $D_k$. See Figure \ref{fig:IsoDisjoint1}. Hence, $S^*_j= \partial (\overline{\eta(D^*_j)})$ is a smoothly embedded $2$-sphere in the exterior of $K$ that separates $E_j$ from $E_i$ and $E_k$. Such a sphere implies that $K$ is a split link, a contradiction.



\begin{figure}[htbp]
\labellist
\small
\pinlabel{$iso \rightarrow$} at 252 65
\pinlabel{$\alpha$} at 72 72
\pinlabel{$D'_i$} at 43 120
\pinlabel{$D_j$} at 125 82
\pinlabel{$D^*_j$} at 405 82
\endlabellist
    \centering
    \includegraphics[width=3in]{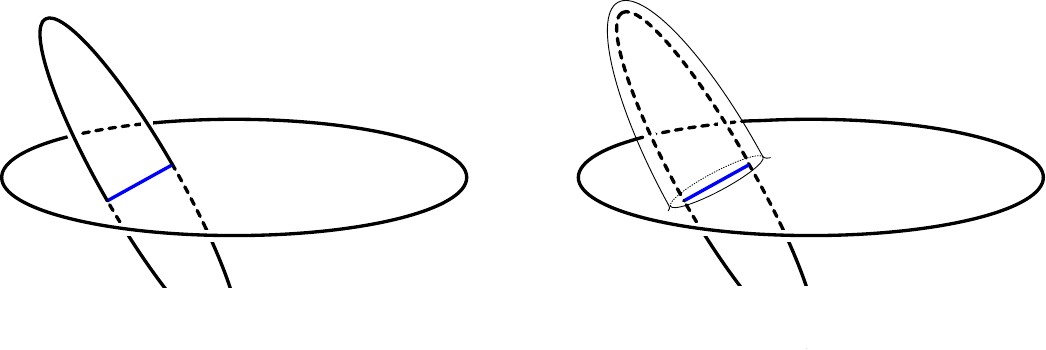}
    \caption{An isotopy of $D_j$ resulting in $D^*_j$, which is disjoint from $D_i$. }
    \label{fig:IsoDisjoint1}
\end{figure}


Next, we suppose that $D_j \cap D_k$ is IF3, equivalently $D_k\rightarrow D_j$. Define the line segments $\alpha = D_i\cap D_j$ and $\beta = D_j\cap D_k$. Note that $\alpha$ is a properly embedded arc in $D_i$ that cuts $D_i$ into two disks, $D'_i$ and $D''_i$. Similarly, $\beta$ is a properly embedded arc segment in $D_k$ cutting it into disks $D'_k$ and $D''_k$. Additionally, suppose that $\alpha \cap \beta = \emptyset$, equivalently $K$ has no triple point. In this case, if $D_i\cap D_k$ is non-empty, then $D_i\cap D_k$ is contained in exactly one of $D'_i$ and $D''_i$ and exactly one of $D'_k$ and $D''_k$. Without loss of generality, suppose $D_i\cap D_k\subset D''_i$ and $D_i\cap D_k\subset D''_k$. In particular, $D'_i$ is disjoint from $D_j$ and $D_k$ and $D'_k$ is disjoint from $D_i$ and $D_j$. We perform an isotopy of $D_j$ supported in a neighborhood of $D'_i\cup D'_k$ after which the resulting disk $D^*_j$ is disjoint from both $D_i$ and $D_k$. Hence, $S^*_j= \partial (\overline{\eta(D^*_j)})$ is a smoothly embedded $2$-sphere demonstrating that $K$ is a split link, a contradiction. See Figure \ref{fig:IsoDisjoint2}. Thus, we can assume that $\alpha \cap \beta \neq \emptyset$. In this case, $\alpha \cap \beta$ is a single point $p$, the triple point of $K$. Note that $\overline{\eta(D_i\cup D_j)}$ is a closed $3$-ball $B$, $D_j\cap B=H$ is a closed disk embedded in the interior of $D_j$, and $H$ cuts $B$ into two $3$-balls $B'$ and $B''$. We perform an isotopy of $D_j$ supported in a neighborhood of $B'$ after which the resulting disk $D^*_j$ is disjoint from $B$, $D_i$ and $D_j$. Hence, $S^*_j= \partial (\overline{\eta(D^*_j)})$ is a smoothly embedded $2$-sphere demonstrating that $K$ is a split link, a contradiction. See Figure \ref{fig:IsoDisjoint3}. Since in all configurations we arrive at give a contradiction, we can assume that $D_j \cap D_k$ is not IF3.

\begin{figure}[htbp]
\labellist
\small
\pinlabel{$iso \rightarrow$} at 252 65
\pinlabel{$\alpha$} at 72 72
\pinlabel{$\beta$} at 148 70
\pinlabel{$D'_i$} at 43 120
\pinlabel{$D'_k$} at 180 120
\pinlabel{$D_j$} at 115 82
\pinlabel{$D^*_j$} at 390 82
\endlabellist
    \centering
    \includegraphics[width=3in]{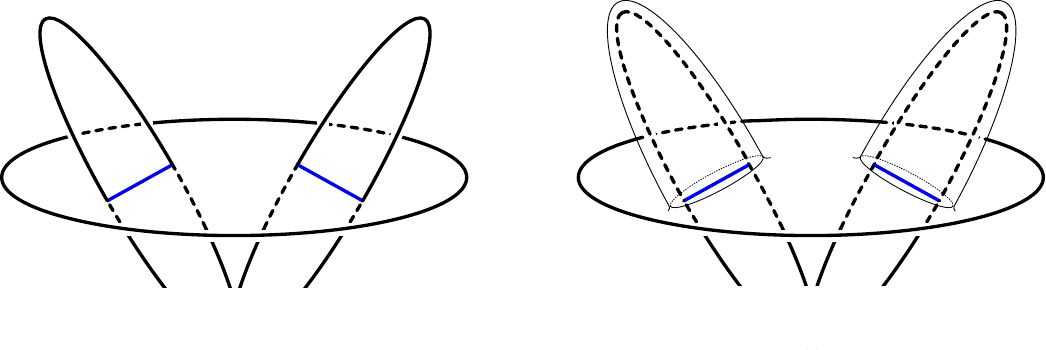}
    \caption{An isotopy of $D_j$ resulting in $D^*_j$, which is disjoint from both $D_i$ and $D_j$. }
    \label{fig:IsoDisjoint2}
\end{figure}

\begin{figure}[htbp]
\labellist
\small
\pinlabel{$iso \rightarrow$} at 252 65
\pinlabel{$D'_i$} at 80 122
\pinlabel{$D'_k$} at 129 123
\pinlabel{$D_j$} at 170 82
\pinlabel{$D^*_j$} at 440 82
\endlabellist
    \centering
    \includegraphics[width=3in]{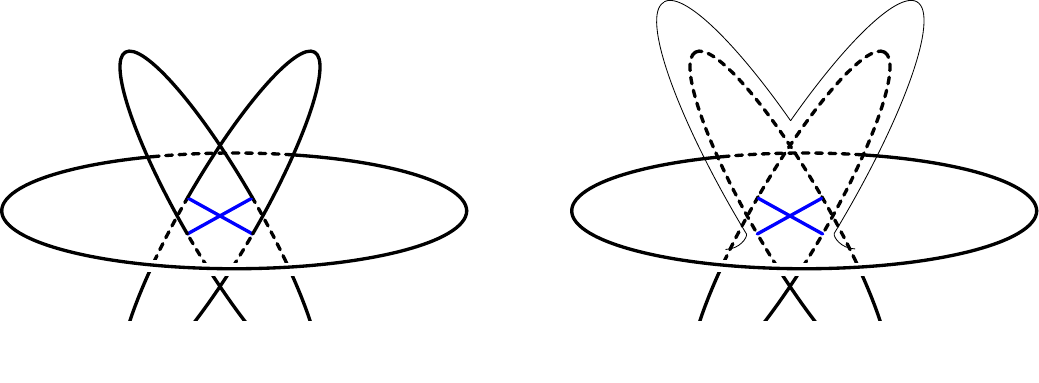}
    \caption{Another possible isotopy of $D_j$ resulting in $D^*_j$, which is disjoint from both $D_i$ and $D_j$. }
    \label{fig:IsoDisjoint3}
\end{figure}

At this point in the argument we have shown that $D_i\rightarrow D_j$ and that $D_j\rightarrow D_k$. Using the knowledge that $D_j\rightarrow D_k$ we can apply the same argument that is given above to $D_i\cap D_k$. As above, $D_i\cap D_k$ in IF1, IF2 and IF3 all lead to a contradiction. We can conclude that $D_k\rightarrow D_i$. Hence, we have established that $$D_i\rightarrow D_j\rightarrow D_k\rightarrow D_i.$$

Suppose that $K$ has no triple point. Then the line segments $\alpha = D_i\cap D_j$, $\beta = D_j\cap D_k$ and $\gamma = D_i\cap D_k$ are all pairwise disjoint. Moreover, $\alpha$ cuts $D_i$ into two disks, $D'_i$ and $D''_i$, $\beta$ cuts $D_j$ into two disks, $D'_j$ and $D''_j$, and $\gamma$ cuts $D_k$ into two disks, $D'_k$ and $D''_k$. Additionally, after possibly relabeling these six disks, $\gamma$ is contained in the interior of $D''_i$, $\alpha$ is contained in the interior of $D''_j$, and $\beta$ is contained in the interior of $D''_k$. See Figure \ref{fig:TriplePierceNoPoint}. Next, we perform an isotopy of $D_j$ supported in a neighborhood of $D'_i$ after which the resulting disk $D^*_j$ is disjoint from $D_i$. We similarly isotope $D_k$ along $D'_j$ and $D_i$ along $D'_k$ to produce disks $D^*_k$ and $D^*_i$, respectively. Note that $E_i=\partial D^*_i$, $E_j= \partial D^*_j$, $E_k= \partial D^*_k$ and all of $D^*_i$, $D^*_j$, $D^*_k$ are all pairwise disjoint. Hence, $K$ is a $3$-component unlink, a contradiction. Consequently, $K$ must have a triple point. This concludes the forward direction of the proof.

\begin{figure}[htbp]
    \centering
    \includegraphics[width=1.5in]{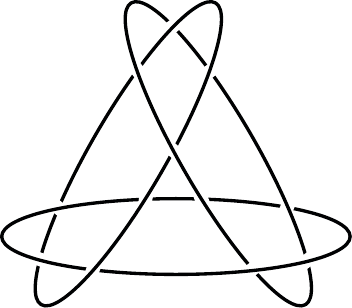}
    \caption{A 3-component elliptic link with the property that $D_i\rightarrow D_j\rightarrow D_k\rightarrow D_i$, but $D_i\cap D_j\cap D_k=\emptyset$. }
    \label{fig:TriplePierceNoPoint}
\end{figure}

We now assume that  $K=E_1 \cup E_2 \cup E_3$ is a 3-component elliptic link in standard position, that $K$ has a triple point, and that $D_1\rightarrow D_2 \rightarrow D_3 \rightarrow D_1$. Then we have line segments $\alpha = D_1\cap D_2$, $\beta = D_2\cap D_3$ and $\gamma = D_1\cap D_3$ such that $\alpha \cap \beta \cap \gamma=\{p\}$. Additionally, $\alpha$ is a properly embedded line segment in $D_1$ and $\gamma$ is embedded in the interior of $D_1$. Since $D_2 \rightarrow D_3$, then $D_2$ is disjoint from $E_3$. We can isotope $E_2$ along $D_2$ until $E_2$ is contained in the boundary of a closed  $\epsilon$-neighborhood of $\alpha$, but still disjoint from $D_1$. See the middle image in Figure \ref{fig:IsoToAlphaBeta}. After this isotopy, $D_3$ is disjoint from $E_1$ and $D_3$ is disjoint from $E_2$ outside of an $\epsilon$-neighborhood of $D_1$. Hence, we can isotope $E_3$ along $D_3$ until $E_3$ is contained in the boundary of a closed  $2\epsilon$-neighborhood of $\gamma$. Thus, $K$ has a link diagram as in rightmost image in Figure \ref{fig:IsoToAlphaBeta}. It is easy to verify that this is a diagram of the $3$-component Borromean link. 

\begin{figure}[htbp]
\labellist
\small
\pinlabel{$iso \rightarrow$} at 208 17
\pinlabel{$iso \rightarrow$} at 434 17
\pinlabel{$\alpha$} at 90 23
\pinlabel{$\gamma$} at 65 45
\pinlabel{$\gamma$} at 290 45
\pinlabel{$D_1$} at 163 36
\pinlabel{$D_1$} at 389 36
\pinlabel{$D_1$} at 622 36

\endlabellist
    \centering
    \includegraphics[width=4.5in]{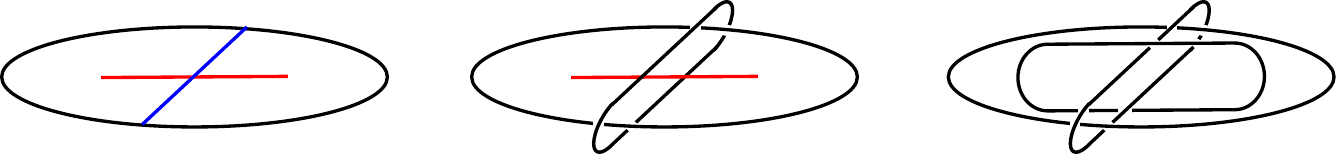}
    \caption{ If $K=E_1 \cup E_2 \cup E_3$ is a 3-component elliptic link in standard position such that $K$ has a triple point and $D_1\rightarrow D_2 \rightarrow D_3 \rightarrow D_1$, then the arcs $\alpha$ and $\beta$ guide an isotopy that shows $K$ is Borromean. }
    \label{fig:IsoToAlphaBeta}
\end{figure}

\end{proof}

\section{Implementation}\label{Sec:imp}

We represent ellipses as a transformation of the unit circle through three matrix multiplications of its coordinates plus a vector translation. The first transforms the circle by a scale factor $\rho$ and stretches it by aspect ratio $\alpha$ such that the semimajor axis is $\rho\sqrt{\alpha} $ and the semiminor axis is $\rho/\sqrt{\alpha}$:

\begin{equation}
    \textbf{M}_1=\begin{bmatrix}
\rho\sqrt{\alpha} & 0 &0\\
0 & \frac{\rho}{\sqrt{\alpha}} & 0\\
0 & 0 & 1
\end{bmatrix} 
\end{equation}

The second rotates it within the XY plane by an angle $\phi$:

\begin{equation}
    \textbf{M}_2=\begin{bmatrix}
\cos{\phi}& -\sin{\phi} &0\\
\sin{\phi} & \cos{\phi} & 0\\
0 & 0 & 1
\end{bmatrix} 
\end{equation}

The third rotates it to an arbitrary unit normal vector $\hat{n}$ by first taking the cross product $\hat{v}=\hat{z}\times\hat{n}$, \textcolor{black}{where $\hat{z}=(0,0,1)$ and $\hat{v}=(v_x,v_y,v_z)$,} and finding the sine and cosine of the angle between $\hat{n}$ and $\hat{z}$ by $s=|\hat{v}|$, \textcolor{black}{$c=\hat{n}\cdot\hat{z}$}. Then we define an intermediate matrix:

\begin{equation}
    \textbf{V}=\begin{bmatrix}
0& -v_z &v_y\\
v_z & 0 & -v_x\\
-v_y & v_x & 0
\end{bmatrix} 
\end{equation}
from which we define the final rotation matrix:
\begin{equation}
    \textbf{M}_3=\mathbb{I}_3+\textbf{V}+\frac{1-c}{s^2}\textbf{V}^2
\end{equation}
With these, we can define the transformation matrix $\textbf{M}=\textbf{M}_3\textbf{M}_2\textbf{M}_1$. Note that the third column of the transformation matrix is \textcolor{black}{a unit} normal vector of the ellipse. Alternately, the transformation matrix may be constructed by the multiplication of the scaling matrix and three Euler angle matrices. Although it is possible to incorporate three-dimensional translation as a multiplication of 4x4 matrices, we opt to simply add the translation vector $\vec{R}=(\Delta X, \Delta Y, \Delta Z)$ such that each ellipse may be written as:

\begin{equation}
    E(\theta)=\textbf{M}\begin{pmatrix}\cos\theta \\ \sin\theta\\ 0\end{pmatrix}+\begin{pmatrix}\Delta X \\ \Delta Y \\ \Delta Z\end{pmatrix}
\end{equation}
This yields, based on the indices of the transformation matrix, the following coordinates of the ellipse: 

\begin{equation}
E(\theta)=\begin{pmatrix}\textbf{M}_{1,1}\cos\theta+\textbf{M}_{1,2}\sin\theta+\Delta X \\\textbf{M}_{2,1}\cos\theta+\textbf{M}_{2,2}\sin\theta+\Delta Y \\ \textbf{M}_{3,1}\cos\theta+\textbf{M}_{3,2}\sin\theta+\Delta Z\end{pmatrix}
\end{equation}

To find the intersection of two ellipses, $E_1$ defined by matrix $\textbf{N}$ and center $\vec{R}_1$ and $E_2$ defined by matrix $\textbf{M}$ and center $\vec{R}_2$ we define a function describing the distance between any point on $E_2$ and the plane of $E_1$:

\begin{equation}
\begin{split}
F_p(\theta)=
\left(
\textbf{N}_{1,3}\textbf{M}_{1,1}+\textbf{N}_{2,3}\textbf{M}_{2,1}+\textbf{N}_{3,3}\textbf{M}_{3,1}
\right)  \cos\theta \\+ \left(
\textbf{N}_{1,3}\textbf{M}_{1,2}+\textbf{N}_{2,3}\textbf{M}_{2,2}+\textbf{N}_{3,3}\textbf{M}_{3,2}
\right)\sin\theta \\ +\left(
\textbf{N}_{1,3}(\Delta X_2 -\Delta X_1 )+\textbf{N}_{2,3}(\Delta Y_2 -\Delta Y_1 )+\textbf{N}_{3,3}(\Delta Z_2 -\Delta Z_1 )
\right) \\ \equiv \lambda\cos\theta+\mu\sin\theta+\nu
\end{split}
\label{eq:plane}
\end{equation}



To determine whether there is an intersection between $E_2$ and the plane of $E_1$, we find the maximum and minimum of $F_p(\theta)$ by finding the zeros of its derivative, which occur at \textcolor{black}{two critical angles, $\theta_c$ and $\theta_c + \pi$, such that}
\begin{equation}
    \theta_c=\tan^{-1}(\frac{\mu}{\lambda}).
\end{equation}
By substituting the two extrema into Eq. \ref{eq:plane} we can determine if both the maximum is greater than zero and the minimum is less than zero (when measured along the normal of $E_1$), indicating that one ellipse intersects the plane of the other, and two zeroes exists. If the zeroes exist, they occur at angles that satisfy the following expression:

\begin{equation}
    \tan{\theta_z}=\frac{\frac{1}{\lambda^2+\mu^2}\left({-\lambda\nu\pm\sqrt{\lambda^2\mu^2+\mu^4-\mu^2\nu^2}}\right)}{\frac{1}{\mu}\left(-\nu+\frac{\lambda^2\nu}{\lambda^2+\mu^2}\mp\frac{\lambda\sqrt{-\mu^2(\nu^2-\lambda^2-\mu^2)}}{\lambda^2+\mu^2}\right)}
\end{equation}
The solutions to this equation are real if the zeros exist. If the second ellipse passes twice through the plane of the first, it may do so twice outside the ellipse, once inside the ellipse, or twice inside the ellipse. To determine the number of times that the second ellipse intersects the first, we first subtract $\vec{R}_1$ and then multiply both ellipses by the inverse matrix of the first transformation matrix, $\textbf{N}^{-1}$. The composition of these transformations takes $E_1$ into a unit circle in the XY plane centered at the origin and takes $E_2$ to an ellipse centered at $\textbf{N}^{-1}\vec{R}_2-\textbf{N}^{-1}\vec{R}_1$ where $(X'_1,Y'_1,Z'_1)= \textbf{N}^{-1}\vec{R}_1$. After the appropriate transformations, the transformed ellipse $E'_2$ intersects the XY plane at the following X and Y coordinates:


\begin{equation*}
\begin{split}
    {E}'_{2X}=\left(\textbf{N}^{-1}_{1,1}\textbf{M}_{1,1}+\textbf{N}^{-1}_{1,2}\textbf{M}_{2,1}+\textbf{N}^{-1}_{1,3}\textbf{M}_{3,1}\right)\cos{\theta_z}\\+\left(\textbf{N}^{-1}_{1,1}\textbf{M}_{1,2}+\textbf{N}^{-1}_{1,2}\textbf{M}_{2,2}+\textbf{N}^{-1}_{1,3}\textbf{M}_{3,2}\right)\sin{\theta_z}\\+\left(\textbf{N}^{-1}_{1,1}\Delta X_2+\textbf{N}^{-1}_{1,2}\Delta Y_2 + \textbf{N}^{-1}_{1,3}\Delta Z_2-X'_1 \right)
    \end{split}
\end{equation*}

\begin{equation}
\begin{split}
    {E}'_{2Y}=\left(\textbf{N}^{-1}_{2,1}\textbf{M}_{1,1}+\textbf{N}^{-1}_{2,2}\textbf{M}_{2,1}+\textbf{N}^{-1}_{2,3}\textbf{M}_{3,1}\right)\cos{\theta_z}\\+\left(\textbf{N}^{-1}_{2,1}\textbf{M}_{1,2}+\textbf{N}^{-1}_{2,2}\textbf{M}_{2,2}+ \textbf{N}^{-1}_{2,3}\textbf{M}_{3,2}\right)\sin{\theta_z}\\+\left(\textbf{N}^{-1}_{2,1}\Delta X_2+\textbf{N}^{-1}_{2,2}\Delta Y_2 + \textbf{N}^{-1}_{2,3}\Delta Z_2-Y'_1 \right)
    \end{split}
\end{equation}

For each of the \textcolor{black}{(at most two) values for $\theta_z$} we find the radius of the intersection $R_z=(E'_{2X})^2+(E'_{2Y})^2$ and determine which are less than 1, be it zero, one, or two times. This yields the number of times $E_2$ passes through \textcolor{black}{the planar region bounded by $E_1$}. If zero intersections are detected, it is still possible that the first ellipse pierces the second, rather than the second piercing the first. In principle one could compute the radial minimum of the line connecting the two intersection points, but in practice it is simpler to simply swap the order of the ellipses and check again. 

To check the triple point planar intersection criterion, we define a matrix $\textbf{A}$ based on the normal vectors of each ellipse ($\vec{n}_1,\vec{n}_2,\vec{n}_3$) and a vector $\vec{b}$ based on the location of the centers of the ellipses along those vectors:

\begin{equation}
        \textbf{A}=\begin{bmatrix}
n_{1x}& n_{1y} &n_{1z}\\
n_{2x}& n_{2y} &n_{2z}\\
n_{3x}& n_{3y} &n_{3z}
\end{bmatrix} 
\end{equation}

\begin{equation}
    \vec{b}=\begin{pmatrix}\vec{n_1}\cdot\vec{ R_1}\\ \vec{n_2}\cdot\vec{R_2}\\ \vec{n_3}\cdot\vec{R_3}\end{pmatrix}
\end{equation}

The intersection point $\vec{P}$ of the three planes, if it exists is located at:
\begin{equation}
    \vec{P}=\textbf{A}^{-1}\vec{b}
\end{equation}
We then apply the inverse transformation of each ellipse to the intersection point, and determine whether the point lies within the unit circle after each transformation. If so, the planar intersection criteria is met.

If ellipses are presented as Cartesian coordinates rather than transformation matrices, the matrix can be determined by computing the gyration tensor of the ellipse's coordinates. The ratio of the two largest eigenvalues of the tensor will be the square of the aspect ratio for computing \textbf{$M_1$}, and the eigenvector with the smallest eigenvalue will be the normal vector, from which \textbf{$M_3$} may be computed. Multiplying the ellipse by \textbf{$M_3^{-1}$} will transform it to the XY plane, and the 2D gyration tensor of this transformed ellipse will have a major eigenvector that will give the angle from which \textbf{$M_2$} can be computed. 
\section{Validation and Performance}

Scripts were written in MATLAB and Python to implement the algorithms described above, and are included as ancillary files. An example of the topology determination of 100 triplets of ellipses is shown in Figure \ref{fig:mosaic}.

Three congruent, concentric ellipses with random orientation have a 1/4 probability of forming Borromean rings. This can be seen by considering that any two such ellipses $A$ and $B$ must either have $A$ piercing $B$ or vice versa, and all three pairs $AB, AC, BC$ are equally likely to be in each piercing configuration. Of the 8 possible piercing configurations, two of them have the correct sequence of piercings for Borromean linking. We generated 10 populations of 10,000 triplets of ellipses and observed that 0.251$\pm$0.002 formed Borromean links, which we believe indicates that the algorithm can correctly identify Borromean linking in ellipses.

\begin{figure}
    \centering
    \includegraphics[width=1\linewidth]{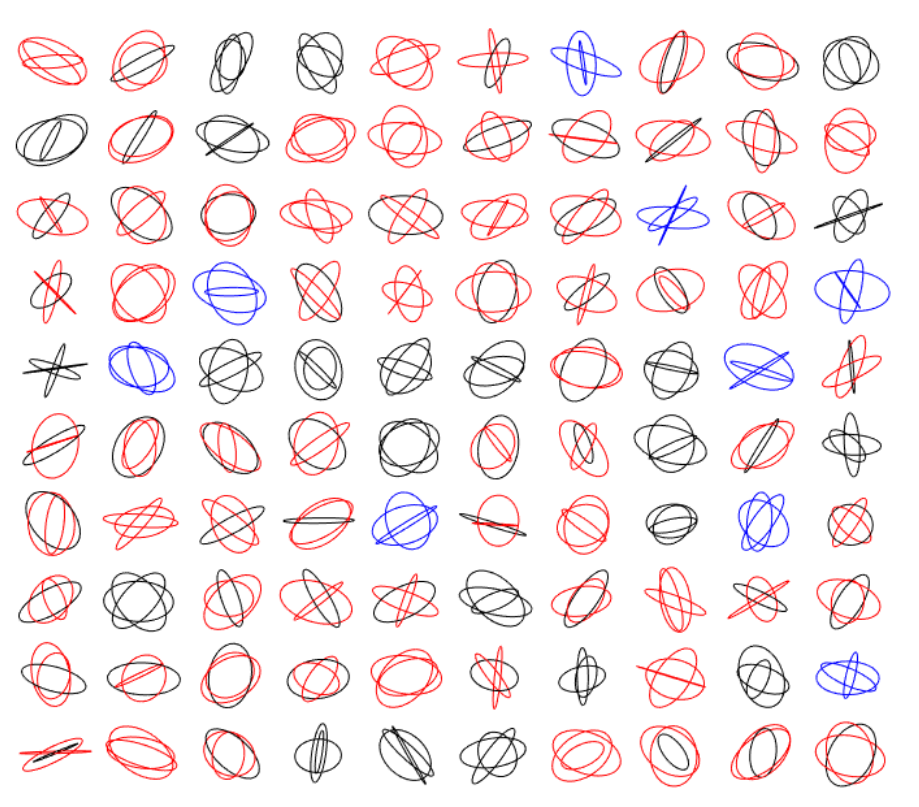}
    \caption{Mosaic of 100 sets of three ellipses projected onto the XY plane, each with a random orientation in three dimensions, an area of $\pi$, an aspect ratio of 1.618, and a center chosen randomly with a uniform distribution between $\pm$ 0.2. Black ellipses are unlinked, pairs or triplets of red ellipses share a Hopf link, and triplets of blue ellipses form Borromean links.}
    \label{fig:mosaic}
\end{figure}

As further validation of our method, we generated sets of 1000 triplets of congruent, concentric ellipses and compared the topology as inferred by our method to that determined by the Jones polynomial. For the latter, we discretized each ellipse into $N$ equi-angular points and used the Topoly package in Python \cite{topoly} to evaluate their Jones polynomial. For $N$ of 10 and above there was 100\% agreement between our method and the Jones polynomial. For $N$ of 8, discretization errors became probable and the inferred topology only matched roughly in 94\% of cases. A discretized version of our method performed less favorably, with 88\% accuracy at 10 vertices and 98\% accuracy at 100 vertices. However, the exact method is typically faster than the discrete. Our method also finds consistent results with the Jones polynomial determination when the ellipses are not concentric, such that Hopf linking is possible.

Our algorithm in MATLAB can evaluate the topology of 1000 ellipse triplets in approximately 0.2 seconds on a laptop, with the Python version taking approximately 0.5 seconds. The typical computation time for evaluating 1000 Jones polynomials of three 10-vertex ellipses in Topoly on a laptop was 32 seconds, indicating our algorithm provides roughly a factor-of-150 improvement. Another Python-based Jones polynomial computation algorithm \cite{barkataki2025parallel} produced results consistent with our ellipse piercing algorithm, with a comparable computation time to Topoly. The performance of these external algorithms (which can evaluate more complex polygonal curves than our algorithm) is hardware- and implementation-dependent, and may be improvable. 

Our method also serves as an alternative to the Gauss linking number for pairs of ellipses, for which the (absolute) value can be only 0 or 1. Comparing the linking number from our method to those calculated from discrete ellipses using the solid angle method \cite{klenin}, with 10 vertices there is typically 97\% agreement and with 20 vertices, 99\%. In the latter case, the discrete linking integral was typically a factor of 10-15 slower, but this increases quadratically with vertex count. Topoly’s GLN calculator was approximately 150 times slower than the Python version of our algorithm. Because our ellipses are not oriented (in the sense of oriented links), our algorithm in its present form cannot determine the sign of the GLN.

\section{Discussion}

With the ability to rapidly detect Hopf and Borromean linking in ellipses, we hope to extend the analysis of random shape packings to more general systems than have previously been explored. This would involve setting up systems analogous to Olympic gels and examining the likelihood of various forms of linking as a function of the density and shape distribution of the ellipses. Beyond its use in research problems, shape intersection algorithms such as the one developed here are of use in computer graphics, for example when checking for overlaps between distinct shapes. Recently, for example, an improved algorithm for Gauss linking number calculation was developed to aid in the animation of chainmail garments \cite{qujames}. 

Qualitatively, our methods for detecting pairwise linking are similar to those used by Polson et al. to detect linking between circles in simulated thermalized chainmail networks \cite{polson}. Their method relied on computing the minimum distances between circles along appropriate vectors. An extension of their linking detection algorithm was developed to search for overlaps between two solid tori. This was based on a computer graphics algorithm developed by Vranek which avoids explicit solutions of 8th-order polynomials solution by exploiting symmetries of their roots \cite{vranek}. Extending our method to solid ellipses does not immediately follow, but a lower bound on the acceptable minor radius of two tori could be derived knowing their location of intersection and angle of inclination. This would allow more realistic simulation of Olympic gel systems, which are composed of finitely-thick polymers with excluded volume interactions.

Obviously, shapes besides ellipses can become linked. It is our long-term goal to be able to detect linking between arbitrary space curves or polygons, such as simulated ring polymers, in a way that is faster than computing the Jones polynomial or Milnor triple product. In this work we have only considered ellipses, but the method of inverting transformations and searching for planar intersections may be extended to arbitrary convex planar polygons. Such a method would involve checking the intersection points of one polygon with the plane of the other, relative to the line extended from each segment in the other polygon. This could potentially be used to extend geometric percolation analysis to arbitrary regular polygons, and most generally to the convex hulls of random walks, point clouds, or similar stochastic shapes.

\subsection*{Acknowledgments: } All three authors are partially supported by NSF PREM grant DMR-2425133. 

\bibliographystyle{unsrt}
\bibliography{ellipserefs}

\end{document}